\documentclass[12pt,leqno]{amsart}
\topmargin=0.02cm
\textwidth =  17cm
\textheight = 23cm
\baselineskip=11pt
\setlength{\oddsidemargin}{0.01 pt}
\setlength{\evensidemargin}{0.01 pt}

\usepackage{amsmath,amsfonts,amssymb,amsthm}
\usepackage{graphicx}
\graphicspath{ }
\usepackage{csquotes}
\usepackage{wrapfig}
\usepackage{accents}
\usepackage{caption}
\usepackage{subcaption}
\usepackage{calligra}
\usepackage[colorlinks]{hyperref}
\hypersetup{citecolor=black}

\numberwithin{figure}{section}

\theoremstyle{plain}
\newtheorem{thm}{Theorem}[section]
\newtheorem{lem}[thm]{Lemma}
\newtheorem{prop}[thm]{Proposition}
\newtheorem{cor}{Corollary}[thm]
\theoremstyle{definition}
\newtheorem{defn}{Definition}[section]

\newtheorem{exmp}{Example}[section]

\theoremstyle{remark}

\newtheorem*{note}{Note}
\usepackage{mathtools}
%opening
\title{Generalized almost statistical convergence}
\author[A. A. Shaikh]{Absos Ali Shaikh$^{1}$}
\address{Department of Mathematics\\ The University of Burdwan\\ Burdwan--713101\\ West Bengal\\ India.}
\email{$^1$aask2003@yahoo.co.in, aashaikh@math.buruniv.ac.in}
%----------Author 2
\author[B. R. Datta]{Biswa Ranjan Datta$^2$}
\address{Department of Mathematics\\ The University of Burdwan\\ Burdwan--713101\\ West Bengal\\ India.}
\email{$^2$math.biswa@gmail.com}
\subjclass[2010]{Primary 46B45; Secondary 40A35, 40G15, 40H05 }
\keywords{Banach limit functional, almost convergence, statistical convergence}
\begin{document} 
\maketitle
\begin{abstract}
	The objective of this paper is to introduce the notion of \textit{generalized almost statistical} (briefly, GAS) convergence of bounded real sequences, which generalizes the notion of almost convergence as well as statistical convergence of bounded real sequences. As a special kind of Banach limit functional, we also introduce the concept of Banach statistical limit functional and the notion of GAS convergence mainly depends on the existence of Banach statistical limit functional. We prove the existence of Banach statistical limit functional. Then we have shown the existence of a GAS convergent sequence, which is neither statistical convergent nor almost convergent. Also, some topological properties of the space of all GAS convergent sequences are investigated.
\end{abstract}
%\footnotetext{	$\mathbf{2010}$\hspace{10pt}Mathematics\; Subject\; Classification: 12J25, 32P05, 26E30, 40A05, 40D99, 40A30.\\ 
%{Keywords and phrases:  Banach limit functional, almost convergence, statistical convergence}}

\section{Introduction}
The theory of convergence arises from the study of the situation when the indices of a sequence become larger and larger.  A sequence $(\xi_n)$ of real numbers is said to be convergent to a real number $l$ if for any $\epsilon>0$ there exists an integer $n_0\in\mathbb N$ such that $|\xi_k-l|<\epsilon$ for all $k>n_0$. But this usual notion of convergence can not always capture the detailed characteristics of a non-convergent sequence. Because there are various non-convergent sequences which are ``nearly to be" convergent. There are several generalizations of usual convergence, viz. almost convergence~\cite{banach,lorentz}, statistical convergence~\cite{fast,steinhaus} etc. Nevertheless, it is always better to have larger set of convergent sequences in more generalized sense under investigation.

The existence of Banach limit functionals was proven by Banach~\cite{banach} in 1932. Using Banach limits, in 1948, Lorentz~\cite{lorentz} introduced the notion of almost convergence, which is a generalization of usual convergence of real sequences. Moricz and Rhoades~\cite{moricz} extended the idea of almost convergence for double sequences and later it was studied in \cite{mahiuddine}. Again in 1951 Fast~\cite{fast} and Steinhaus~\cite{steinhaus}  introduced independently the notion of statistical convergence by a rigorous use of natural density of subsets of $\mathbb N$, which is another generalization of usual convergence. Salat~\cite{salat}, Fridy~\cite{fridy,fridy1}, Miller~\cite{miller} and many others~\cite{maddox,erdos,connor,fridyorhan2} studied the concept of statistical convergence. In last few decades, several types of generalizations of usual convergence have been introduced and lots of research papers can be found in literature. Mursaleen~\cite{mursaleenlambda} introduced the idea of $\lambda$-statistical convergence in 2000. If the sequence $\lambda$ is choosen, particularly, by $\lambda=(1,2,3,4,\dots)$, then $\lambda$-statistical convergence coincides with the statistical convergence.  In 2001 Kostyrko et al. \cite{kostyyrkosalat2001} introduced the notion of ideal convergence of real sequences. If we consider the ideal of all subsets of $\mathbb N$ having natural density zero, then the ideal convergence coincides with statistical convergence. Later Lahiri and Das~\cite{lahiridas} extended the concept of ideal convergence for nets in topological spaces. In 2003, Mursaleen and Edely~\cite{mursaleen} extended the idea of statistical convergence for real double sequences. Further some generalizations of usual convergence were introduced and studied in \cite{yurdakadim,koga}.

In this paper, the existence of a certain type of Banach limit functionals is shown which are designated as \textit{Banach statistical limit} functionals. With the help of Banach statistical limit functionals we have introduced \textit{generalized almost statistical} (briefly, GAS) convergence of bounded real sequences.  Then we have shown that GAS convergence is a generalization of almost convergence as well as statistical convergence. Hence a natural question arises: does there exist any GAS convergent sequence which is neither almost convergent nor statistical convergent? The answer to the above question is affirmative and the existence of such sequence is shown in Example~\ref{ex:onlygalst}. At the end, we also investigated some topological properties of the space of all GAS convergent sequences. The outline of this paper is as follows: Section 2 deals with some rudimentary facts about Banach limit functionals, almost convergence, natural density, statistical convergence etc. as preliminaries. Section 3 is devoted to the investigation of main results of the paper (see Theorem~\ref{th1}) and the conclusion of the hole paper is drawn in the last section.

\section{Preliminaries}
The limit functional $f$ on the space $c$ of all convergent real sequences defined by $f(x)=\lim\limits_{n\to \infty}x_n,\ x\in c$, can be extended to the space $l_\infty$ of all bounded real sequences by Hahn-Banach Extension theorem, where $l_\infty$ is the normed linear space with sup-norm defined by $||x||_\infty=\sup\limits_{n\in \mathbb N}|x_n|$ for all $x\in l_\infty$. Banach~\cite{banach} showed the existence of certain extensions which are called Banach limits, defined as follows:
\begin{defn}
	A functional $B:l_\infty \to \mathbb R$ is called Banach limit if it satisfies the following :
	
	(i) $||B||=1$,
	
	(ii) $B|_c=f$, where $f$ is the limit functional on $c$,
	
	(iii) If $x\in l_\infty$ with $x_n\geq0$ $\forall n\in\mathbb N$, then $B(x)\geq0$,
	
	(iv) If $x\in l_\infty$, then $B(x)=B(Sx)$,	where $S$ is the shift operator defined by $S((x_n)_{n=1}^\infty)=S((x_n)_{n=2}^\infty)$.
\end{defn}

	The concept of almost convergence, which is a generalization of usual convergence, was introduced by Lorentz~\cite{lorentz} in 1948 by using Banach limit functionals.

\begin{defn}
	For some $x\in l_\infty$, if $B(x)$ is unique (i.e. invariant) for all Banach limit functionals $B$, then the sequence $x$ is called almost convergent to $B(x)$.
\end{defn}

Let $\mathcal A\subset l_\infty$ be the set of all almost convergent real sequences. Clearly $c\subsetneq \mathcal A$.

\begin{defn}
	Let $P\subset\mathbb N$. If the limit $$\delta(P)=\lim\limits_{n\to \infty}\frac{|P\cap\{1,2,\dots,n\}|}{n}$$ exists, then $\delta(P)$ is called the natural density or asymptotic density of $P$ in $\mathbb N$.
\end{defn}

\begin{note}
	Any subset $P$ of $\mathbb N$ may not have natural density as it depends totally on the existence of the limit.
\end{note}

By using the concept of natural density, in 1951, Fast~\cite{fast}, Steinhaus~\cite{steinhaus} introduced independently the notion of statistical convergence, which is another generalization of usual convergence. 

\begin{defn}A sequence $(p_k)_k$ of real numbers is called convergent statistically to $\ell\in\mathbb R$ if for any $\epsilon>0$, $\delta(\{k\in\mathbb N: |p_k-\ell|>\epsilon\})=0$. We use the notation $p_k\xrightarrow{stat}\ell$ or $stat\lim\limits_{n\to \infty}p_n=\ell$.
\end{defn}

\begin{exmp}
	The sequence $(\lambda_n)_n$ defined by 
	$$\lambda_j=\begin{cases}
	j \quad\text{if } j=k^2,~k\in\mathbb N \\
	0 \quad\text{otherwise }
	\end{cases}$$ is statistically convergent to $0$.
\end{exmp}

Above example shows that unlike usual convergence, a statistically convergent sequence may be unbounded.
Let $st$ be the set of all bounded statistically convergent real sequences.

Throughout this paper, we say that a condition $\mathcal C$ is satisfied by $x_k$ \textit{for almost all} $ k $ (briefly, $\pmb{a.a.k}$) \textbf{if} $E=\{k\in\mathbb N:x_k\text{ does not satisfy condition }\mathcal C$ $\}\implies\delta(E)=0$. We'll now state some well known results which will be used in the sequel.

\begin{lem}\label{lem1}
	Let $C,D\subset\mathbb N$. If $\delta(C)$, $\delta(D)$, $\delta(C\cup D)$ exist, then $\max\{\delta(C),\delta(D)\}\leq\delta(C\cup D)\leq \min \{\delta(C)+\delta(D),1\}$. Furthermore, if $\delta(C)=\delta(D)=0$, then $\delta(C\cup D)$ exists and equals to $0$.
\end{lem}

\begin{lem}\label{lem3}\cite{salat}
	Let $(p_n)_n$ be a real sequence. Then, $p_n\xrightarrow{stat}\ell$ if and only if there exists some $J\subset\mathbb N$ with $J=\{\alpha_1<\alpha_2<\dots <\alpha_n<\dots\}$ such that $\delta(J)=1$ and $\lim\limits_{j\to\infty} p_{\alpha_j}=\ell$.
\end{lem}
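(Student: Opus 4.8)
The plan is to prove the two implications separately: the ``if'' part by a direct density estimate, and the harder ``only if'' part by a diagonal blocking construction that manufactures the density-one index set $J$.

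For the ``if'' direction I would assume such a set $J=\{\alpha_1<\alpha_2<\cdots\}$ with $\delta(J)=1$ and $\lim_{j\to\infty}p_{\alpha_j}=\ell$ is given, fix $\epsilon>0$, and consider $E_\epsilon=\{k\in\mathbb N:|p_k-\ell|>\epsilon\}$. Since $p_{\alpha_j}\to\ell$, only finitely many $\alpha_j\in J$ can satisfy $|p_{\alpha_j}-\ell|>\epsilon$; collecting these in a finite set $F$ gives $E_\epsilon\subseteq(\mathbb N\setminus J)\cup F$. As $\delta(\mathbb N\setminus J)=1-\delta(J)=0$ and $\delta(F)=0$, Lemma~\ref{lem1} yields $\delta\big((\mathbb N\setminus J)\cup F\big)=0$, and therefore $\delta(E_\epsilon)=0$. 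Since $\epsilon>0$ is arbitrary, $p_k\xrightarrow{stat}\ell$.

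For the ``only if'' direction I would, for each $j\in\mathbb N$, set $K_j=\{k\in\mathbb N:|p_k-\ell|\ge 1/j\}$, so that $\delta(K_j)=0$ by hypothesis and the family is nested, $K_1\subseteq K_2\subseteq\cdots$. Because $\tfrac1n|K_j\cap\{1,\dots,n\}|\to 0$, I can choose strictly increasing integers $v_1<v_2<\cdots$ with $\tfrac1n|K_j\cap\{1,\dots,n\}|<1/j$ whenever $n>v_j$. I then define $J$ to consist of all $k\le v_1$ together with, on each block $v_j<k\le v_{j+1}$, exactly those $k$ with $k\notin K_j$ (that is, $|p_k-\ell|<1/j$). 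Since $\delta(J)$ will turn out to be positive, $J$ is infinite and can be listed as $\{\alpha_1<\alpha_2<\cdots\}$.

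It then remains to verify the two required properties, and the counting estimate is where I expect the main obstacle to lie. For convergence along $J$: if $k\in J$ and $k>v_j$, then $k$ lies in some block $v_m<k\le v_{m+1}$ with $m\ge j$, so $k\notin K_m$; as $K_j\subseteq K_m$ this forces $k\notin K_j$, i.e. $|p_k-\ell|<1/j$, giving $\lim_{j\to\infty}p_{\alpha_j}=\ell$. For the density: when $v_j<n\le v_{j+1}$, the complement $\mathbb N\setminus J$ meets each earlier block $(v_i,v_{i+1}]$ (for $i<j$) in $K_i\cap(v_i,v_{i+1}]$ and the current partial block $(v_j,n]$ in $K_j\cap(v_j,n]$; since $K_i\subseteq K_j$ for $i\le j$, the whole thing satisfies $(\mathbb N\setminus J)\cap\{1,\dots,n\}\subseteq K_j\cap\{1,\dots,n\}$, whence $\tfrac1n|(\mathbb N\setminus J)\cap\{1,\dots,n\}|\le\tfrac1n|K_j\cap\{1,\dots,n\}|<1/j$. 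Letting $n\to\infty$ (so that $j\to\infty$) yields $\delta(\mathbb N\setminus J)=0$, i.e. $\delta(J)=1$. The delicate point is precisely the bookkeeping that the accumulated complement over all earlier blocks remains trapped inside $K_j\cap\{1,\dots,n\}$ — which is exactly what the nesting of the $K_j$ together with the gap condition defining the $v_j$ is arranged to guarantee.
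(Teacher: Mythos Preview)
The paper does not supply its own proof of this lemma; it is stated with a citation to \v{S}al\'at and used as a black box. Your argument is correct and is, in fact, the classical proof due to \v{S}al\'at: the ``if'' direction via the containment $E_\epsilon\subseteq(\mathbb N\setminus J)\cup F$ with $F$ finite, and the ``only if'' direction via the nested exceptional sets $K_j=\{k:|p_k-\ell|\ge 1/j\}$, thresholds $v_j$, and the block definition of $J$. The key inclusion $(\mathbb N\setminus J)\cap\{1,\dots,n\}\subseteq K_j\cap\{1,\dots,n\}$ for $v_j<n\le v_{j+1}$, justified by the nesting $K_i\subseteq K_j$ for $i\le j$, is exactly the standard bookkeeping, and your density estimate $<1/j$ then follows from the choice of $v_j$. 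One cosmetic point: you appeal to $\delta(J)>0$ to conclude $J$ is infinite before you have computed $\delta(J)$; cleaner is to first show $\delta(\mathbb N\setminus J)=0$ and then observe that this forces $J$ to be infinite.
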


\begin{lem}\label{lem4}
	Let $(r_n)_n$  be a statistically convergent real sequence and $stat\lim\limits_{n\to\infty}r_n=\ell$. If $(s_n)_n$ is defined by $s_n=r_n-\ell$ for all $n\in \mathbb N$, then $(s_n)_n$ converges statistically to $0$.
\end{lem}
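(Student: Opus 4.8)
The plan is to argue directly from the definition of statistical convergence, since the passage from $(r_n)$ to $(s_n)$ is just a translation by the constant $\ell$. The key observation is that for every index $k$ we have $|s_k - 0| = |s_k| = |r_k - \ell|$, so the magnitude of the deviation from the candidate limit is literally unchanged. This means that for each fixed $\epsilon > 0$ the two exceptional sets coincide as subsets of $\mathbb{N}$, namely
\[
\{k\in\mathbb N : |s_k - 0| > \epsilon\} = \{k\in\mathbb N : |r_k - \ell| > \epsilon\}.
\]

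With that identity in hand the proof is essentially immediate. First I would fix an arbitrary $\epsilon > 0$ and invoke the hypothesis $stat\lim_{n\to\infty} r_n = \ell$, which by definition gives $\delta(\{k : |r_k - \ell| > \epsilon\}) = 0$. Then I would substitute the set identity above to conclude $\delta(\{k : |s_k - 0| > \epsilon\}) = 0$. Since $\epsilon > 0$ was arbitrary, this is exactly the statement that $(s_n)_n$ converges statistically to $0$.

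I do not expect any genuine obstacle here: the lemma is a routine consequence of the translation-invariance of the absolute value together with the definition, and no properties of natural density beyond what is already recorded are needed. As an alternative one could instead appeal to Lemma~\ref{lem3}: extract $J = \{\alpha_1 < \alpha_2 < \cdots\}$ with $\delta(J) = 1$ and $\lim_{j\to\infty} r_{\alpha_j} = \ell$, observe that $s_{\alpha_j} = r_{\alpha_j} - \ell \to 0$ by the ordinary algebra of limits, and then apply Lemma~\ref{lem3} in the reverse direction to recover $stat\lim_{n\to\infty} s_n = 0$. I would, however, favour the direct set-identity argument as the cleaner and shorter of the two.
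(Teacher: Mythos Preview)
Your argument is correct: the set identity $\{k:|s_k-0|>\epsilon\}=\{k:|r_k-\ell|>\epsilon\}$ is immediate from $s_k=r_k-\ell$, and the conclusion follows at once from the definition. The paper itself states this lemma without proof, treating it as an obvious consequence of the definition, so your direct verification is exactly what is implicitly intended.
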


\begin{lem}\label{lem5}
	Let $(p_n)_n$ be a statistically convergent real sequence. Then, the telescoping sequence of $(p_n)_n$ is statistically convergent to $0$ i.e. $stat\lim\limits_{n\to\infty}(p_n-p_{n+1})=0$.
\end{lem}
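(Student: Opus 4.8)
The plan is to reduce to the statistically null case and then control the telescoping differences by a union of two density-zero index sets. First I would set $\ell = stat\lim\limits_{n\to\infty} p_n$ and, invoking Lemma~\ref{lem4}, pass to the sequence $s_n = p_n - \ell$, which satisfies $s_n \xrightarrow{stat} 0$. Since $p_n - p_{n+1} = s_n - s_{n+1}$, the telescoping sequence of $(p_n)_n$ coincides with that of $(s_n)_n$, so it suffices to show $s_n - s_{n+1} \xrightarrow{stat} 0$.

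The core estimate is the triangle inequality: for any $\epsilon > 0$, whenever $|s_n - s_{n+1}| > \epsilon$ holds, at least one of $|s_n| > \epsilon/2$ or $|s_{n+1}| > \epsilon/2$ must hold, since $|s_n - s_{n+1}| \le |s_n| + |s_{n+1}|$. Writing $A_\epsilon = \{n \in \mathbb{N} : |s_n| > \epsilon/2\}$ and $A_\epsilon - 1 = \{m \in \mathbb{N} : m+1 \in A_\epsilon\} = \{n : |s_{n+1}| > \epsilon/2\}$, this yields the inclusion
\[
\{n \in \mathbb{N} : |s_n - s_{n+1}| > \epsilon\} \subseteq A_\epsilon \cup (A_\epsilon - 1).
\]
Because $s_n \xrightarrow{stat} 0$, the defining property of statistical convergence gives $\delta(A_\epsilon) = 0$.

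The one genuinely non-routine point is to verify that the unit left-shift $A_\epsilon - 1$ again has natural density zero; I would establish this directly from the counting estimate $|(A_\epsilon - 1) \cap \{1,\dots,n\}| \le |A_\epsilon \cap \{1,\dots,n+1\}|$, so that dividing by $n$ and letting $n \to \infty$ forces $\delta(A_\epsilon - 1) = 0$ (the stray factor $(n+1)/n \to 1$ is harmless). With both $A_\epsilon$ and $A_\epsilon - 1$ of density zero, the last assertion of Lemma~\ref{lem1} gives that $\delta\big(A_\epsilon \cup (A_\epsilon - 1)\big)$ exists and equals $0$, and a squeeze argument on the subset then yields $\delta(\{n : |s_n - s_{n+1}| > \epsilon\}) = 0$. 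As $\epsilon > 0$ was arbitrary, this is precisely $s_n - s_{n+1} \xrightarrow{stat} 0$, which completes the argument. I expect the shift-invariance of density zero to be the only real obstacle, and even that is mild; the remainder is the triangle inequality together with Lemma~\ref{lem1}.
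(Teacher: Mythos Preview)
Your proof is correct and follows essentially the same route as the paper: the triangle inequality gives the inclusion $\{n:|p_n-p_{n+1}|>\epsilon\}\subset\{n:|p_n-\ell|>\epsilon/2\}\cup\{n:|p_{n+1}-\ell|>\epsilon/2\}$, and then Lemma~\ref{lem1} closes out. Your preliminary reduction to $s_n=p_n-\ell$ via Lemma~\ref{lem4} is purely cosmetic (the paper works directly with $|p_n-\ell|$), while your explicit verification that the shifted set $A_\epsilon-1$ has density zero fills in a step the paper leaves implicit.
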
  
\begin{proof}
Let $stat\lim\limits_{n\to\infty}p_n=l$ and $\epsilon>0$ be any real number. Now, for any $n\in\mathbb N$, $|p_{n+1}-p_n|\leq|p_{n+1}-l|+|p_n-l|$. Thus, $|p_{n+1}-l|\leq \frac{\epsilon}{2}$ and $|p_n-l|\leq \frac{\epsilon}{2}$ $\implies$ $|p_{n+1}-p_n|\leq\epsilon$. Contrapositively, $|p_{n+1}-p_n|>\epsilon$ $\implies$ $|p_{n+1}-l|> \frac{\epsilon}{2}$ or $|p_n-l|> \frac{\epsilon}{2}$. Hence, $\{n\in\mathbb N:|p_{n+1}-p_n|>\epsilon\}\subset\{n\in\mathbb N:|p_{n+1}-l|> \frac{\epsilon}{2}\}\cup\{n\in\mathbb N:|p_n-l|> \frac{\epsilon}{2}\}$. Therefore, $\delta(\{n\in\mathbb N:|p_{n+1}-p_n|>\epsilon\})=0$. Thus $stat\lim\limits_{n\to\infty}(p_n-p_{n+1})=0$.
\end{proof}

\begin{lem}\label{lem6}
	Let $(x_n)_n$ and $(y_n)_n$ be two real sequences such that $x_n\xrightarrow{stat}\ell$ and $y_k=x_k \ a.a.k$. Then, $y_n\xrightarrow{stat}\ell$.
\end{lem}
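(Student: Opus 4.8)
The plan is to fix an arbitrary $\epsilon > 0$ and prove that the exceptional set $A := \{k \in \mathbb N : |y_k - \ell| > \epsilon\}$ has natural density zero; since $\epsilon$ is arbitrary, this is precisely the assertion $y_n \xrightarrow{stat} \ell$.

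First I would name the two sets furnished by the hypotheses. Because $y_k = x_k$ for almost all $k$, the set $E := \{k \in \mathbb N : y_k \neq x_k\}$ satisfies $\delta(E) = 0$. Because $x_n \xrightarrow{stat} \ell$, the set $B := \{k \in \mathbb N : |x_k - \ell| > \epsilon\}$ satisfies $\delta(B) = 0$.

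The key step is the set inclusion $A \subseteq B \cup E$. Indeed, suppose $k \in A$ but $k \notin E$; then $y_k = x_k$, so $|x_k - \ell| = |y_k - \ell| > \epsilon$, which forces $k \in B$. Hence every index of $A$ lies in $B \cup E$. Applying Lemma~\ref{lem1} to $B$ and $E$, whose densities are both zero, yields that $\delta(B \cup E)$ exists and equals $0$.

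It remains to pass from $B \cup E$ down to its subset $A$, and this is the one point that needs care: by the Note above, not every subset of $\mathbb N$ possesses a natural density, so I cannot simply assert that $\delta(A)$ exists before proving it is zero. I would settle this via the upper density. From $A \subseteq B \cup E$ one gets $|A \cap \{1,\dots,n\}| \le |(B\cup E) \cap \{1,\dots,n\}|$ for every $n$, whence $0 \le \limsup_{n\to\infty} \frac{|A \cap \{1,\dots,n\}|}{n} \le \delta(B\cup E) = 0$. The squeeze forces both the upper and the lower density of $A$ to vanish, so $\delta(A)$ exists and equals $0$. As $\epsilon > 0$ was arbitrary, this completes the argument.
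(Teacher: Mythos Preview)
Your proof is correct and follows essentially the same route as the paper's: define $E$, establish the inclusion $\{k:|y_k-\ell|>\epsilon\}\subset\{k:|x_k-\ell|>\epsilon\}\cup E$, and conclude via Lemma~\ref{lem1}. Your additional care in passing from $\delta(B\cup E)=0$ to $\delta(A)=0$ via the upper density is a nice touch that the paper leaves implicit.
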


\begin{proof}
Let $E=\{k\in\mathbb N:y_k\not=x_k \}$ and $\epsilon>0$ be any real number. Then $\delta(E)=0$. Now, $$\{n\in\mathbb N:|y_n-\ell|>\epsilon \}\subset \{n\in\mathbb N:|x_n-\ell|>\epsilon \}\cup E,$$ which implies that $\delta(\{n\in\mathbb N:|y_n-\ell|>\epsilon \})=0$. Hence $y_n\xrightarrow{stat}\ell$.
\end{proof}

\begin{defn}
	A family $\mathcal I$ of subsets of $\mathbb N$ is said to be an ideal of $\mathbb N$ if\\ 
(i) $A\cup B \in \mathcal I$ for each $A,B\in \mathcal I$,\\ 
(ii) $A\subset B$ with $B\in \mathcal I$ implies $A\in \mathcal I$.
\end{defn}
\begin{defn}\cite{kostyyrkosalat2001}
	A real sequence $(z_k)_k$ is said to be $\mathcal I$-convergent to $ l $ if for any $\epsilon>0$, the set $ \{k\in\mathbb N:|z_k-l|>\epsilon \}\in \mathcal I $.
\end{defn}
In \cite{kostyyrkosalat2001} Kostyrko et al.  showed that $\mathcal I_\delta=\{A\subset \mathbb N: \delta(A)=0 \}$ is an ideal and $\mathcal I_\delta$-convergence is actually statistical convergence. The following result is a consequence of Hahn-Banach theorem.

\begin{prop}\label{pro1}
	Let $X$ be a normed linear space and $Y$ be a subspace space of $X$. If $\alpha\in X-\overline{Y}$ and $\mu=d(\alpha,Y):=\inf \{d(\alpha,y):y\in Y\}$, then there exists a bounded linear functional $f:X\to \mathbb R$ such that $f(\alpha)=1$, $f(y)=0, \forall y\in Y$ with $\|f\|=\mu^{-1}$.
\end{prop}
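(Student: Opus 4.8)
The plan is to build the functional first on the smallest subspace where the three requirements already pin it down, and then invoke Hahn--Banach to extend it to all of $X$ without enlarging its norm. Since $\alpha\notin\overline{Y}$, the distance $\mu=d(\alpha,Y)$ is strictly positive, and in particular $\alpha\notin Y$; consequently the algebraic sum $Z=Y+\mathbb R\alpha$ is a direct sum, so every $z\in Z$ admits a unique representation $z=y+t\alpha$ with $y\in Y$ and $t\in\mathbb R$ (if $y_1+t_1\alpha=y_2+t_2\alpha$ with $t_1\neq t_2$ then $\alpha=(y_1-y_2)/(t_2-t_1)\in Y$, a contradiction). This uniqueness lets me define $g:Z\to\mathbb R$ by $g(y+t\alpha)=t$, which is linear and satisfies $g(\alpha)=1$ and $g(y)=0$ for every $y\in Y$ directly from the formula.

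The crux of the argument is to compute the operator norm of $g$ on $Z$ and show it equals $\mu^{-1}$. For the upper bound I would take any $z=y+t\alpha$ with $t\neq0$ and write $\|z\|=|t|\,\|\alpha-(-y/t)\|\ge|t|\,\mu$, using that $-y/t\in Y$ and the definition of $\mu$ as an infimum; hence $|g(z)|=|t|\le\|z\|/\mu$, which gives $\|g\|_Z\le\mu^{-1}$. For the matching lower bound I would use the definition of the infimum to pick a sequence $(y_n)$ in $Y$ with $\|\alpha-y_n\|\to\mu$, and evaluate $g$ at $z_n=\alpha-y_n\in Z$: since $g(z_n)=1$ and $\|z_n\|\to\mu$, the ratios $|g(z_n)|/\|z_n\|\to\mu^{-1}$, forcing $\|g\|_Z\ge\mu^{-1}$. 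The two estimates together yield $\|g\|_Z=\mu^{-1}$.

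Finally I would apply the Hahn--Banach theorem to the bounded linear functional $g$ defined on the subspace $Z$, producing an extension $f:X\to\mathbb R$ with $f|_Z=g$ and $\|f\|=\|g\|_Z=\mu^{-1}$. Since both $\alpha$ and every element of $Y$ lie in $Z$, the extension automatically inherits $f(\alpha)=1$ and $f(y)=0$ for all $y\in Y$, completing the proof.

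I expect the main obstacle to be the norm computation of the middle paragraph: the direct-sum decomposition and the extension step are routine, but establishing the \emph{exact} value $\mu^{-1}$ rather than a mere bound requires the two-sided estimate, and the lower bound in particular hinges on exploiting an approximating sequence in $Y$ that realizes the distance $\mu$ only in the limit.
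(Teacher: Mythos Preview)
Your proof is correct and is precisely the standard argument; the paper itself does not supply a proof of this proposition but merely records it as ``a consequence of Hahn--Banach theorem,'' which is exactly the route you take. Your norm computation and extension step fill in the details the paper omits, so there is nothing to compare or correct.
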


Lorentz~\cite{lorentz} proved that $\mathcal A$ is a closed linear non-separable subspace of $l_\infty$ and $\mathcal A$ is dense in itself but nowhere dense in $\l_\infty$. Also, Lorentz~\cite{lorentz} characterized the almost convergence given as follows:

\begin{prop}\label{prop:lorentz}\cite{lorentz}
	Let $x=(x_n)_n\in l_\infty$. Then $(x_n)_n$ is almost convergent to some $\ell$ if and only if $$\lim\limits_{k\to\infty}\frac{x_p+x_{p+1}+\dots+x_{p+k-1}}{k}=\ell$$
	holds for each $p\in\mathbb N$.
\end{prop}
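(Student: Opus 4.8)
The plan is to prove both implications by tying the invariance of all Banach limits on $x$ to the uniform convergence of the moving averages
$$t_{k,p}(x) := \frac{x_p + x_{p+1} + \cdots + x_{p+k-1}}{k};$$
throughout I read the displayed limit as holding uniformly in $p$, which is the content the argument actually delivers (convergence for each fixed $p$ by itself reduces to ordinary Ces\`aro summability, since for a bounded sequence shifting the starting index by a fixed amount does not alter the Ces\`aro limit). Write $\mathbf 1 = (1,1,1,\dots)$ for the constant sequence, and for each $k$ let $y^{(k)}$ be the sequence whose $n$-th entry is $t_{k,n}(x)$.

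For the sufficiency I would start from axiom (iv): iterating shift-invariance gives $B(x) = B(S^i x)$ for every $i \ge 0$, and since $(S^i x)_n = x_{n+i}$, averaging over $0 \le i \le k-1$ and using linearity yields $B(x) = B(y^{(k)})$ for every $k$. If $t_{k,p}(x) \to \ell$ uniformly in $p$, then $\|y^{(k)} - \ell\mathbf 1\|_\infty \to 0$, so combining $\|B\| = 1$ from (i) with $B(\ell\mathbf 1) = \ell$ from (ii) gives
$$|B(x) - \ell| = |B(y^{(k)} - \ell\mathbf 1)| \le \|y^{(k)} - \ell\mathbf 1\|_\infty \xrightarrow{k\to\infty} 0,$$
whence $B(x) = \ell$ for every Banach limit $B$; that is, $x$ is almost convergent to $\ell$.

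For the necessity the plan is to introduce the upper functional $q(x) := \inf_{k\ge 1}\sup_{p\ge 1} t_{k,p}(x)$ and to show it dominates every Banach limit while being attained by one of them. Writing $M_k := \sup_p t_{k,p}(x)$, the identity $B(x) = B(y^{(k)})$ together with $y^{(k)} \le M_k \mathbf 1$ and the positivity axiom (iii) gives $B(x) \le M_k$ for every $k$, hence $B(x) \le q(x)$ for all Banach limits $B$. Next I would check that $q$ is a sublinear functional that is shift-invariant (since $q(x - Sx) \le 0$ and $q(Sx - x) \le 0$, the telescoped sums $x_p - x_{p+k}$ contributing $O(1/k)$ after dividing by $k$), that agrees with $\lim$ on $c$, and that satisfies $q(x) \le \|x\|_\infty$. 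The classical Hahn--Banach theorem underlying Proposition~\ref{pro1} then produces, for the given $x$, a linear functional $B^+ \le q$ with $B^+(x) = q(x)$, and these properties of $q$ force $B^+$ to be a Banach limit; applying the same construction to $-x$ yields a Banach limit $B^-$ with $B^-(x) = -q(-x) = \sup_k \inf_p t_{k,p}(x)$.

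To finish, a Fekete-type argument shows that $k \mapsto k\,M_k$ is subadditive (a window of length $k+l$ splits into a window of length $k$ followed by one of length $l$), so $M_k \to \inf_k M_k = q(x)$; dually $m_k := \inf_p t_{k,p}(x) \to -q(-x)$. If $x$ is almost convergent to $\ell$, then in particular $B^+(x) = B^-(x) = \ell$, i.e. $\lim_k M_k = \lim_k m_k = \ell$, and since $m_k \le t_{k,p}(x) \le M_k$ for every $p$ this is precisely the uniform convergence $t_{k,p}(x) \to \ell$. I expect the necessity half to be the main obstacle: the delicate points are the subadditivity of $q$ (hence of $M_k$, which upgrades $\inf_k$ to $\lim_k$ via Fekete's lemma) and the verification that the Hahn--Banach extension $B^+$ genuinely satisfies all four Banach-limit axioms, with shift-invariance and positivity being the ones that must be read off from the corresponding features of the dominating functional $q$.
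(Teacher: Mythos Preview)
The paper does not prove this proposition at all: it is stated with a citation to Lorentz and used as a black box, so there is no in-paper argument to compare yours against. Your sketch is, in fact, essentially Lorentz's original proof, and it is sound; your observation that the displayed limit must be read as \emph{uniform} in $p$ (otherwise the condition collapses to ordinary Ces\`aro summability, which is strictly weaker than almost convergence) is correct and worth keeping, since the paper's formulation omits the word ``uniformly''.

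A couple of small points if you want to tighten the write-up. First, sublinearity of $q$ is cleanest to verify \emph{after} the Fekete step, since once $q(x)=\lim_k M_k(x)$ you get $q(x+y)\le q(x)+q(y)$ directly from $M_k(x+y)\le M_k(x)+M_k(y)$; arguing with the bare infimum is awkward because the minimizing $k$'s need not match. Second, for $B^+|_c=\lim$ you can bypass any computation of $q$ on convergent sequences: from $q(\mathbf 1)=1$ and $q(-\mathbf 1)=-1$ you get $B^+(\mathbf 1)=1$, and then shift-invariance plus $\|B^+\|\le 1$ gives $B^+(x)=B^+(S^n x)\to B^+(\ell\mathbf 1)=\ell$ since $\|S^n x-\ell\mathbf 1\|_\infty\to 0$ whenever $x_n\to\ell$. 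With these in place all four Banach-limit axioms for $B^+$ follow from the corresponding features of $q$, exactly as you anticipated.
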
 

\begin{exmp}\label{ex:onlyalmost}\cite{miller}
	The divergent sequencce $(1,0,1,0,\dots)\in l_\infty$ is almost convergent to $\frac{1}{2}$. But it is not statistically convergent.
\end{exmp}

\begin{exmp}\label{ex:onlyst}\cite{miller}
	Consider a sequence $x=(x_n)_n$ in $\{0,1 \}$ constructed as follows:
	$$x=(~\underbrace{0,0,\dots,0}_{100\text{ copies}}~,~   \overbrace{1,1,\dots,1}^{10\text{ copies}}~,~ \underbrace{0,0,\dots,0}_{100^2\text{ copies}}~,~ \overbrace{1,1,\dots,1}^{10^2\text{ copies}}~,~ \underbrace{0,0,\dots,0}_{100^3\text{ copies}}~,~   \overbrace{1,1,\dots,1}^{10^3\text{ copies}}~,~ \dots)$$
	Then, $(x_n)_n$ is statistically convergent to $0$. But it is not almost convergent.
\end{exmp}

In \cite{miller} Miller and Orhan showed that almost convergence and statistical convergence are incomparable, i.e., there exists a statistically convergent (resp. almost convergent) sequence which is not almost convergent (resp. statistically convergent). Thus $st\not\subset\mathcal A$ (see, Example~\ref{ex:onlyst}) and $\mathcal A\not\subset st$ (see, Example~\ref{ex:onlyalmost}). In this paper, we have defined a new type of convergence which is a generalization and comparable with both almost and statistical convergence. To this end we have to introduce the concept of Banach limits in the context of statistical convergence.

\section{Main Results}

Salat~\cite{salat} showed that $st$ is a closed subspace of $l_\infty$. Since, $\phi_n\xrightarrow{stat}a$, $\xi_n\xrightarrow{stat}b$ imply $\phi_n+\xi_n\xrightarrow{stat}a+b$ and $\lambda\xi_n\xrightarrow{stat}\lambda b$ for any $\lambda\in\mathbb R$ and $\phi, \xi\in st$, the map $g:st\to \mathbb R$ defined by $g(x)=stat\lim\limits_{n\to\infty} x_n$ is a linear functional on $st$. We call this functional $g$ by statistical limit functional on $st$.

\begin{lem}
	The linear functional $g:st\to \mathbb R$ defined by $g(x)=stat\lim\limits_{n\to\infty} x_n$ is a bounded with $||g||=1$.
\end{lem}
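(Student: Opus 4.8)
The plan is to establish the two inequalities $\|g\|\le 1$ and $\|g\|\ge 1$ separately, working throughout with the sup-norm that $st$ inherits from $l_\infty$, so that by definition $\|g\|=\sup\{|g(x)|:x\in st,\ \|x\|_\infty\le 1\}$.

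For the upper bound I would fix an arbitrary $x=(x_n)_n\in st$ with $g(x)=stat\lim_{n\to\infty}x_n=\ell$ and show $|\ell|\le\|x\|_\infty$. The cleanest route is to invoke Lemma~\ref{lem3}: there exists $J=\{\alpha_1<\alpha_2<\cdots\}\subset\mathbb N$ with $\delta(J)=1$ and $\lim_{j\to\infty}x_{\alpha_j}=\ell$. Since $|x_{\alpha_j}|\le\|x\|_\infty$ for every $j$, passing to the limit yields $|\ell|\le\|x\|_\infty$, that is, $|g(x)|\le\|x\|_\infty$. As $x$ was arbitrary, this gives both boundedness of $g$ and $\|g\|\le 1$. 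Alternatively, one can argue by contradiction without Lemma~\ref{lem3}: if $|\ell|>\|x\|_\infty$, choose $\epsilon$ with $0<\epsilon<|\ell|-\|x\|_\infty$; then $|x_k-\ell|\ge|\ell|-|x_k|\ge|\ell|-\|x\|_\infty>\epsilon$ for every $k$, so the exceptional set $\{k:|x_k-\ell|>\epsilon\}$ equals all of $\mathbb N$ and has density $1$, contradicting $stat\lim_{n\to\infty}x_n=\ell$.

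For the lower bound I would simply exhibit a single sequence attaining the norm. Taking the constant sequence $e=(1,1,1,\dots)$, which is convergent and hence statistically convergent with $g(e)=1$ and $\|e\|_\infty=1$, we obtain $\|g\|\ge|g(e)|/\|e\|_\infty=1$. Combining this with $\|g\|\le 1$ from the previous paragraph gives $\|g\|=1$.

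I do not expect a genuine obstacle here; the only step requiring care is the passage from the statistical limit to an honest bound by the sup-norm, and this is precisely what Lemma~\ref{lem3} delivers by reducing the statistical limit to an ordinary limit along a density-one subsequence (on which the uniform bound $|x_{\alpha_j}|\le\|x\|_\infty$ is available). The remaining verifications are routine.
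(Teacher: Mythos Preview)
Your proof is correct and follows the same two-inequality strategy as the paper: bound $|g(x)|\le\|x\|_\infty$ for the upper estimate, then exhibit a constant sequence for the lower estimate. Your justification of the upper bound via Lemma~\ref{lem3} (or the alternative contradiction argument) is in fact more careful than the paper's chain $|stat\lim_n x_n|\le|\sup_n x_n|\le\sup_n|x_n|$, whose first inequality need not hold in general.
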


\begin{proof}
Let $x\in st$. Then $$|g(x)|=|stat\lim\limits_{n\to\infty}x_n |\leq|\sup\limits_{n\in \mathbb N}x_n|\leq\sup\limits_{n\in \mathbb N}|x_n|=||x||_\infty,$$ which implies $\frac{|g(x)|}{||x||_\infty}\leq1$. Thus  $||g||\leq1$.
Again, consider $y=(\lambda,\lambda,\lambda,\dots)\in st$. Then $g(y)=\lambda=||y||_\infty$. Thus $\exists \ y\in st$ such that $\frac{|g(y)|}{||y||_\infty}=1$ $\implies$ $||g||\geq1$. Hence $||g||=1$.
\end{proof}

Thus by Hahn-Banach theorem, $g$ can be extended to $l_\infty$ preserving norm i.e., $\exists$ $L\in (l_\infty)^*$ such that $L|_{st}=g$ and $||L||=||g||$, where $(l_\infty)^*$ is the continuous dual (dual space) of $l_\infty$. We now state and prove the main result of the paper.

\begin{thm}\label{th1}
	There exists a functional $\mathcal F:l_\infty\to\mathbb R$ satisfying the following:
	
	(i) $||\mathcal F||=1$,
	
	(ii)  $\mathcal F|_{st}=g$, where $g$ is the statistical limit functional on $st$,
	
	(iii) If $s\in l_\infty$ with $s_n\geq0$ $a.a. n$, then $\mathcal F(s)\geq0$,
	
	(iv)  If $x\in l_\infty$, then $\mathcal F(x)=\mathcal F(Tx)$, where $T:l_\infty\to l_\infty$ is a map with $(Tx)_k=(Sx)_k \ a.a. k$ for each $x\in l_\infty$ and $S$ is the shift operator defined by $S((x_n)_{n=1}^\infty)=(x_n)_{n=2}^\infty$.
\end{thm}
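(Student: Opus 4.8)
The plan is to mirror Banach's original Hahn--Banach construction of Banach limits, but with the ordinary limit superior replaced by a \emph{statistical} limit superior and with ``everywhere'' replaced by ``for almost all $k$''. The bare norm-preserving extension $L\in(l_\infty)^*$ of $g$ already delivers (i) and (ii), but there is no reason for it to be positive in the a.a. sense or shift invariant. So instead I would build a single sublinear functional $p$ on $l_\infty$ that forces all four properties on \emph{any} dominated extension, and then extend $g$ within the domination $p$.

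First I would introduce the statistical limit superior
$$\phi(x)=\inf\{\beta\in\mathbb R:\delta(\{k\in\mathbb N:x_k>\beta\})=0\},\qquad x\in l_\infty,$$
and record its basic features: $\phi$ is finite with $\phi(x)\le\|x\|_\infty$; it is positively homogeneous and, via Lemma~\ref{lem1} on densities of unions, subadditive; shifting an index set by a fixed amount preserves its density, so $\phi(S^nx)=\phi(x)$ for every $n\ge0$; and on $st$ one has $\phi=g$. To annihilate the coboundaries $x-Sx$ (which is what (iv) requires) I would pass from $\phi$ to the averaged functional
$$p(x)=\inf\Big\{\phi\big(\tfrac1k\textstyle\sum_{i=1}^{k}S^{n_i}x\big):k\in\mathbb N,\ n_1,\dots,n_k\ge 0\Big\},$$
the infimum running over all finite multisets of shifts.

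The technical heart is to verify that $p$ is sublinear. Positive homogeneity is clear; subadditivity is the delicate point, and I would prove it by the classical ``product of averaging operators'' trick: given near-optimal shift multisets $\{n_i\}_{i=1}^k$ for $x$ and $\{m_j\}_{j=1}^{k'}$ for $y$, the combined multiset $\{n_i+m_j\}$ averages $x+y$, and because $\phi$ is shift invariant and subadditive one gets $\phi\big(\frac1{kk'}\sum_{i,j}S^{n_i+m_j}(x+y)\big)\le\phi\big(\frac1k\sum_i S^{n_i}x\big)+\phi\big(\frac1{k'}\sum_j S^{m_j}y\big)$, whence $p(x+y)\le p(x)+p(y)$. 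Along the way I would check the three structural estimates that encode the conclusions: taking $k=1,\ n_1=0$ gives $p(x)\le\phi(x)\le\|x\|_\infty$; taking consecutive shifts $n_i=i-1$ telescopes the average of $x-Sx$ into $\frac1k(x_m-x_{m+k})$, whose statistical limit superior is $O(1/k)$, so $p(x-Sx)\le 0$ and likewise $p(Sx-x)\le 0$; and for $x\in st$ every averaged-and-shifted copy is still statistically convergent to $g(x)$ (write $x=g(x)+e$ with $e$ statistically null by Lemma~\ref{lem4}, and observe that a finite average of shifts of a bounded statistically null sequence is statistically null by Lemma~\ref{lem1}), so $\phi$ equals $g(x)$ on each copy and hence $p|_{st}=g$.

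With $p$ sublinear and $g\le p$ on $st$ (indeed with equality), the Hahn--Banach dominated-extension theorem yields a linear $\mathcal F:l_\infty\to\mathbb R$ with $\mathcal F|_{st}=g$ and $\mathcal F\le p$ everywhere, and all four claims then fall out of this domination. For (i), applying $\mathcal F\le p\le\|\cdot\|_\infty$ to $\pm x$ gives $\|\mathcal F\|\le1$, and the constant sequence (as in the lemma preceding the theorem) gives equality. Statement (ii) is built in. For (iii), if $s_n\ge0$ a.a. then $(-s)_n\le0$ a.a., so $\phi(-s)\le0$, hence $\mathcal F(-s)\le p(-s)\le0$ and $\mathcal F(s)\ge0$. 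For (iv), $p(x-Sx)\le0$ and $p(Sx-x)\le0$ force $\mathcal F(x)=\mathcal F(Sx)$, and since $Tx=Sx$ a.a. gives $(Tx-Sx)_k=0$ a.a., applying (iii) to $Tx-Sx$ and to $Sx-Tx$ yields $\mathcal F(Tx)=\mathcal F(Sx)=\mathcal F(x)$. The main obstacle, as indicated, is the subadditivity of $p$: one must run the product-of-shifts estimate with the statistical limit superior in place of the ordinary one, and this is precisely where the shift invariance of $\phi$ and the union bound for densities in Lemma~\ref{lem1} are indispensable.
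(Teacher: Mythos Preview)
Your argument is correct, and it is a genuinely different route from the paper's. The paper does \emph{not} build a sublinear dominating functional: it instead sets $N=\{y\in l_\infty: y_k=(x-Sx)_k\text{ a.a. }k\text{ for some }x\in l_\infty\}$, proves $d(\mathbf 1,N)=1$ by a direct case analysis (using Lemmas~\ref{lem3}, \ref{lem5}, \ref{lem6} when $p_n\ge0$ for all $n$), and invokes the distance form of Hahn--Banach (Proposition~\ref{pro1}) to get $\mathcal F$ with $\|\mathcal F\|=1$, $\mathcal F(\mathbf 1)=1$, and $\mathcal F|_N=0$. Properties (ii) and (iii) are then verified \emph{after the fact}: the inclusion $st_0\subset\ker\mathcal F$ is obtained by choosing, for each $\epsilon$, a specific map $T$ that zeroes out the coordinates where $|\xi_k|>\epsilon$, and positivity is proved by contradiction using $\|\mathcal F\|=1$. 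Your approach front-loads all the structure into the single functional $p$ built from the statistical $\limsup$ and the Banach--Lorentz averaging-over-shifts infimum; once $p$ is shown sublinear via the product-of-shifts trick, all four conclusions fall out immediately from $\mathcal F\le p$. The gain of your method is economy and transparency (and it shows that \emph{every} $p$-dominated extension of $g$ is a Banach statistical limit); the paper's method avoids the subadditivity computation for $p$ but pays for it with several ad hoc verifications after $\mathcal F$ is produced.
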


\begin{proof}
Let $G=\{x-Sx : x\in l_\infty\}$ and $N=\{y\in l_\infty : y_k=x_k \ a.a. k, x\in G\}$. So $G\subset N\subset l_\infty$. Clearly $G$ is a subspace of $l_\infty$. Let $p,q\in N$ and any $\mu, \lambda\in \mathbb R$. Then $\exists$ $x,y\in l_\infty$ such that $p_n=(x-Sx)_n \ a.a.n$ and $q_n=(y-Sy)_n \ a.a.n$. Since, $G$ is a subspace, $\mu(x-Sx)+\lambda(y-Sy)\in G$. So by Lemma \ref{lem1}, we have $(\mu p+\lambda q)_n=(\mu(x-Sx)+\lambda(y-Sy))_n \ a.a.n$ which implies $\mu p+\lambda q\in N$. Therefore $N$ is a subspace of $l_\infty$.

Now, we claim that $d(1,N)=1$, where $1=(1,1,1,\dots)$. For, $0\in G\implies d(1,G)\leq1$. Since, $G\subset N$, $d(1,N)\leq d(1,G)\leq1$. Let $p\in N$. Then $p_n=(b-Sb)_n \ a.a.n$ for some $b\in l_\infty$. If $p_n\leq0$ for some $n\in \mathbb N$, then $||1-p||_\infty=\sup\limits_{n\in \mathbb N}|1-p_n|\geq1$. Again, if $p_n\geq0,\ \forall n\in\mathbb N$, then $(b-Sb)_n\geq0\ a.a. n$ which implies $b_n\geq b_{n+1}\ a.a.n$. Thus $(b_n)_n$ has a subsequence $(b_{n_k})_k$ such that $b_{n_k}\geq b_{n_{k+1}}\ \forall k\in\mathbb N$ with $\delta(\{n_k:k\in\mathbb N \})=1$. Since $(b_{n_k})_k$ is a monotonically decreasing bounded sequence, $\lim\limits_{n\to \infty}b_{n_k}=l$ (say) exists. Thus by Lemma~\ref{lem3} $stat\lim\limits_{n\to\infty}b_n=l$. Therefore using Lemma~\ref{lem5} we have $stat\lim\limits_{n\to\infty}(b_n-b_{n+1})=0$, i.e. $stat\lim\limits_{n\to\infty}(b-Sb)_n=0$. So, by Lemma~\ref{lem6} $stat\lim\limits_{n\to\infty}p_n=0$. Hence using Lemma~\ref{lem3}, there is a subsequence $(p_{n_j})_j$ of $(p_n)_n$ such that $\delta(\{n_j:j\in\mathbb N \})=1$ with $\lim\limits_{j\to\infty}p_{n_j}=0$. So, $||1-p||_\infty=\sup\limits_{n\in \mathbb N}|1-p_n|\geq\sup\limits_{j\in\mathbb N}|1-p_{n_j}|=1$. Thus, for any $p\in N$ we have $||1-p||_\infty\geq1$, which implies $d(1,N)\geq1$. Hence $d(1,N)=1$.

Clearly $1\notin \overline N$. By the Proposition~\ref{pro1} there exists a functional $\mathcal F:l_\infty\to \mathbb R$ such that $\mathcal F(1)=1$, $\mathcal F(y)=0,\ \forall\ y\in N$ with $||\mathcal F||=d(1,N)^{-1}=1$. Let $st_0$ be the collection of all bounded sequences which converge stasistically to $0$. 

We claim that $st_0\subset \ker \mathcal F$. For, let $\xi\in st_0$ and $\epsilon>0$ be any real number. Then $\xi\in l_\infty$ and $\xi_n\xrightarrow{stat}0$ $\implies$ if $U_\epsilon=\{k\in\mathbb N:|\xi_k|>\epsilon \}$, then $\delta(U_\epsilon)=0$. Again, $\mathcal F(y)=0,\ \forall\ y\in N$ implies that $\mathcal F(y)=0,\ y_k=(z-Sz)_k\ a.a.k,\ \forall z\in G$. So by the linearity of $\mathcal F$ we can write $\mathcal F(x)=\mathcal F(T x)$, where $T :l_\infty\to l_\infty$ is any map such that $\sigma_T(z)=\{j\in\mathbb N: (Tz)_j\not=(Sz)_j \}$ and $\delta[\sigma_T(z)]=0,\ \forall z\in l_\infty$.

Choosing $\sigma_T(x)=U_\epsilon$ for all $x\in l_\infty$,  we consider the map $T:l_\infty\to l_\infty$ defined by $Tx=r$, for all $x\in l_\infty$, where
\begin{equation*}
r_k= \begin{cases}
0\quad \text{if }\ k+1\in U_\epsilon\\
x_{k+1}\quad \text{otherwise.}
\end{cases}
\end{equation*}
Then $\mathcal F(\xi)=\mathcal F(T \xi)$ which implies that $|\mathcal F(\xi)|=|\mathcal F(T\xi)|\leq||\mathcal F||\ ||T\xi||_\infty=\sup\{|(T\xi)_k|:k\in\mathbb N \}\leq\epsilon$. Since $\epsilon>0$ is arbitrary, $\mathcal F(\xi)=0$. Thus $st_0\subset \ker \mathcal F$.

Next we claim that $\mathcal F|_{st}=g$. For, let $x\in st$, i.e. $x_n\xrightarrow{stat}\Omega$ (say). Then the sequence $e$ defined by $e_j=x_j-\Omega, \forall j\in \mathbb N$. Now by Lemma~\ref{lem4}, $x-\Omega1=e\in st_0$ which implies $e\in \ker \mathcal F$. Now 
\begin{equation*}
\mathcal F(x)=\mathcal F(x-\Omega1)+\mathcal F(\Omega1)\\
=\mathcal F(e)+\Omega \mathcal F(1)\\
=\Omega \mathcal F(1)\\
=\Omega\\
=stat\lim\limits_{n\to\infty}x_n\\
=g(x).
\end{equation*}
Thus $\mathcal F|_{st}=g$.

Next we show that if $u,v\in l_\infty$ with $u_k=v_k\ a.a.k$, then $\mathcal F(u)=\mathcal F(v)$. For, let $w_n=u_n-v_n,\ \forall n\in \mathbb N$. Let $K=\{k:w_k\not=0 \}$. Then $\delta(K)=0$. Now consider the map $T:l_\infty \to l_\infty$ with $\sigma_T(w)=K$ defined by $Ta=b$, for all $a\in l_\infty$, where
\begin{equation*}
b_k=  \begin{cases}
0\quad\text{if }\  k+1\in K\\
a_{k+1}\quad \text{otherwise.}
\end{cases}
\end{equation*}
Therefore $Tw=0$. Now, $\mathcal F(u)-\mathcal F(v)=\mathcal F(u-v)=\mathcal F(w)=\mathcal F(Tw)=\mathcal F(0)=0$. Thus $\mathcal F(u)=\mathcal F(v)$.

If possible, suppose that there exists $z\in l_\infty$ with $z_n\geq0\ a.a.n$ such that $\mathcal F(z)<0$. Consider $y\in l_\infty$ defined by $y_n=\frac{z_n}{||z||_\infty}$. Clearly $y_n\geq0\ a.a.n$ and $\mathcal F(y)<0$. Again consider $x\in l_\infty$ defined by
\begin{equation*}
x_k=\begin{cases}
0 \quad\text{if } y_k<0\\
y_k \quad\text{if } y_k\geq0.
\end{cases}
\end{equation*}
Then, $x_n=y_n,\ a.a.n$ and $0\leq x_n\leq 1,\ \forall n\in \mathbb N$. Therefore $\mathcal F(x)=\mathcal F(y)<0$ and $$||1-x||_\infty=\sup\limits_{n\in \mathbb N}|1-x_n|\leq1.$$ Again $\mathcal F(1-x)=\mathcal F(1)-\mathcal F(x)=1-\mathcal F(x)>1$. Thus we get $1<|\mathcal F(1-x)|\leq||\mathcal F||\ ||1-x||_\infty=||1-x||_\infty\leq1$, which is a contradiction. Hence, if $s\in l_\infty$ with $s_k\geq0\ a.a.k$, then $\mathcal F(s)\geq0$. This completes the proof.
\end{proof}

The functional $\mathcal F$ in the Theorem~\ref{th1} is named as \textit{Banach statistical limit} functional and defined as follows:

\begin{defn}\label{def:bsl}
	A functional $\mathcal F:l_\infty\to\mathbb R$ is called a Banach statistical limit if it satisfies the following : 
	
	(i) $||\mathcal F||=1$,
	
	(ii)  $\mathcal F|_{st}=g$, where $g$ is the statistical limit functional on $st$,
	
	(iii) If $s\in l_\infty$ with $s_n\geq0$ $a.a. n$, then $\mathcal F(s)\geq0$,
	
	(iv)  If $x\in l_\infty$, then $\mathcal F(x)=\mathcal F(Tx)$,	where $T:l_\infty\to l_\infty$ is a map with $(Tx)_k=(Sx)_k \ a.a. k$ for each $x\in l_\infty$ and $S$ is the shift operator defined by $S((x_n)_{n=1}^\infty)=(x_n)_{n=2}^\infty$.
\end{defn}

\begin{cor}\label{cor:bslf}
	Every Banach statistical limit functional is a Banach limit functional on $l_\infty$.
\end{cor}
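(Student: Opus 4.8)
The plan is to verify directly that an arbitrary Banach statistical limit functional $\mathcal F$ fulfils all four defining conditions of a Banach limit functional, reading three of them off almost immediately from the corresponding Banach statistical limit axioms and reserving genuine work only for the shift-invariance clause. Throughout I would lean on one auxiliary observation that I would establish first: if $u,v\in l_\infty$ satisfy $u_k=v_k\ a.a.k$, then $\mathcal F(u)=\mathcal F(v)$. Indeed, the difference $u-v$ is bounded and vanishes off a set of natural density zero, so for every $\epsilon>0$ the set $\{k:|u_k-v_k|>\epsilon\}$ sits inside the density-zero set $\{k:u_k\neq v_k\}$; hence $u-v\in st$ with $stat\lim_{n}(u_n-v_n)=0$. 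Condition (ii) of Definition~\ref{def:bsl} then forces $\mathcal F(u-v)=g(u-v)=0$, and linearity gives $\mathcal F(u)=\mathcal F(v)$.

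With this in hand, the norm condition $\|\mathcal F\|=1$ is literally axiom (i). For the convergence-on-$c$ condition, I would use that $c\subset st$ and that for a usual-convergent sequence the statistical limit coincides with the ordinary limit; thus for $x\in c$ one has $\mathcal F(x)=g(x)=stat\lim_{n}x_n=\lim_{n}x_n=f(x)$, so $\mathcal F|_c=f$. For positivity, if $x_n\geq 0$ for every $n$ then in particular $x_n\geq 0\ a.a.n$ (the exceptional set is empty, hence of density zero), so axiom (iii) of the Banach statistical limit directly yields $\mathcal F(x)\geq 0$.

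The one clause demanding attention is shift-invariance $\mathcal F(x)=\mathcal F(Sx)$. Axiom (iv) supplies only $\mathcal F(x)=\mathcal F(Tx)$ for a map $T$ with $(Tx)_k=(Sx)_k\ a.a.k$, so I must bridge the gap between $Tx$ and the genuine shift $Sx$. This is exactly where the auxiliary observation pays off: since $Tx$ and $Sx$ agree off a density-zero set, it gives $\mathcal F(Tx)=\mathcal F(Sx)$, and combining yields $\mathcal F(x)=\mathcal F(Tx)=\mathcal F(Sx)$. I expect this to be the main (indeed only) obstacle, precisely because the Banach statistical limit is engineered to respect merely a density-almost shift; the content of the corollary is that the statistical tolerance built into condition (ii) is strong enough to collapse that distinction and recover exact shift-invariance. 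Having checked all four conditions, I would conclude that $\mathcal F$ is a Banach limit functional on $l_\infty$.
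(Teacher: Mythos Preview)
Your proof is correct. The paper supplies no explicit argument for this corollary, treating it as an immediate consequence of Definition~\ref{def:bsl}; your detailed verification is sound and fills in what the paper leaves tacit. One small simplification worth noting: for the shift-invariance clause you need not pass through your auxiliary observation at all, since the shift $S$ itself is an admissible choice of $T$ in axiom (iv) (the exceptional set $\{k:(Sx)_k\neq(Sx)_k\}$ is empty and hence of density zero), so $\mathcal F(x)=\mathcal F(Sx)$ follows directly from (iv) with $T=S$. Your route via the auxiliary observation is nonetheless valid and has the virtue of making explicit the useful fact (also proved inside the paper's Theorem~\ref{th1}) that $\mathcal F$ is insensitive to changes on density-zero sets.
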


\begin{cor}
	Let $\mathcal F$ be any Banach statistical limit functional on $l_\infty$. If $u,v\in l_\infty$ with $u_k=v_k\ a.a.k$, then $\mathcal F(u)=\mathcal F(v)$.
\end{cor}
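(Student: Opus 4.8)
The plan is to reduce the statement to property (iv) of Definition~\ref{def:bsl} together with the linearity of $\mathcal F$. First I would set $w=u-v\in l_\infty$; since $\mathcal F(u)-\mathcal F(v)=\mathcal F(u-v)=\mathcal F(w)$, it suffices to show $\mathcal F(w)=0$. The hypothesis $u_k=v_k\ a.a.k$ says exactly that $K:=\{k\in\mathbb N:w_k\neq 0\}$ has $\delta(K)=0$, so $w$ is a bounded sequence that vanishes outside a set of natural density zero.

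Next I would manufacture an admissible map $T$ of the kind permitted in (iv) which annihilates $w$. Define $T:l_\infty\to l_\infty$ by $(Ta)_k=a_{k+1}$ when $k+1\notin K$ and $(Ta)_k=0$ when $k+1\in K$. Applying $T$ to $w$ itself gives $Tw=0$: if $k+1\in K$ then $(Tw)_k=0$ by construction, while if $k+1\notin K$ then $(Tw)_k=w_{k+1}=0$ by the definition of $K$. Granting that $T$ is admissible, property (iv) then yields $\mathcal F(w)=\mathcal F(Tw)=\mathcal F(0)=0$, and hence $\mathcal F(u)=\mathcal F(v)$, which is the claim.

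The one point requiring genuine verification, and which I expect to be the main obstacle, is that this $T$ really meets the defining requirement of (iv), namely $(Ta)_k=(Sa)_k\ a.a.k$ for every $a\in l_\infty$. By construction the indices where $(Ta)_k$ differs from $(Sa)_k=a_{k+1}$ all lie in $\{k\in\mathbb N:k+1\in K\}$, which is the translate $K-1$ of $K$. Thus it remains only to check that natural density is translation-invariant, so that $\delta(K-1)=\delta(K)=0$; this is an elementary counting estimate comparing $|(K-1)\cap\{1,\dots,n\}|$ with $|K\cap\{1,\dots,n+1\}|$. I note that this is precisely the sub-step already carried out inside the proof of Theorem~\ref{th1}, so here the task reduces to re-running that construction using only the abstract properties (i)--(iv) of an arbitrary Banach statistical limit functional rather than the specific $\mathcal F$ constructed there.
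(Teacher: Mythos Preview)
Your argument is correct and is essentially identical to the paper's own: the paper (inside the proof of Theorem~\ref{th1}) also sets $w=u-v$, defines the same map $T$ with $(Ta)_k=0$ if $k+1\in K$ and $(Ta)_k=a_{k+1}$ otherwise, observes $Tw=0$, and concludes $\mathcal F(u)-\mathcal F(v)=\mathcal F(w)=\mathcal F(Tw)=0$. If anything you are slightly more careful, since you explicitly flag the translation-invariance step $\delta(K-1)=\delta(K)=0$ needed for admissibility of $T$, which the paper leaves implicit.
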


Now, by using Banach statistical limit functional, we introduce a new type of convergence, called, \textit{generalized almost statistical} convergence (briefly, GAS convergence), defined as follows:

\begin{defn}\label{def:asc}
	Let $x\in l_\infty$. Then $x$ is said to be \textbf{generalized almost statistically convergent} to $\lambda$, if $\mathcal F(x)=\lambda$ for all Banach statistical limit functionals $\mathcal F$, i.e., if $\mathcal F(x)$ is invariant (unique) for each Banach statistical limit fnctionals $\mathcal F$ on $l_\infty$.
\end{defn}
 Let $\mathcal S$ be the set of all GAS convergent real sequences. The following result is easily obtained from the Definitions~\ref{def:bsl} and \ref{def:asc}.

\begin{cor}\label{cor:stsubsets}
	Every statistically convergent sequence is GAS convergent with the same limit, i.e. $st\subset \mathcal S$.
\end{cor}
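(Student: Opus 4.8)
The plan is to read this inclusion off directly from the defining properties, since every Banach statistical limit functional agrees with $g$ on $st$ by construction. First I would fix an arbitrary $x\in st$ and set $\lambda=stat\lim\limits_{n\to\infty}x_n=g(x)$, which is well defined because $x$ is a bounded statistically convergent sequence and $g$ is the statistical limit functional on $st$ established earlier.

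Next I would invoke property (ii) of Definition~\ref{def:bsl}: for \emph{any} Banach statistical limit functional $\mathcal F$ on $l_\infty$ we have $\mathcal F|_{st}=g$, and hence $\mathcal F(x)=g(x)=\lambda$. The key observation is that the right-hand side does not depend on the choice of $\mathcal F$, so the value $\mathcal F(x)$ is the common number $\lambda$ for every Banach statistical limit functional. By Definition~\ref{def:asc} this is precisely what it means for $x$ to be GAS convergent to $\lambda$, with the GAS limit coinciding with the statistical limit. Since $x\in st$ was arbitrary, this gives $st\subset\mathcal S$.

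The only point that genuinely requires care is that GAS convergence be non-vacuous: if no Banach statistical limit functional existed, the invariance condition of Definition~\ref{def:asc} would hold trivially and the inclusion would carry no content. Theorem~\ref{th1} removes this concern by exhibiting at least one such functional $\mathcal F$, so the argument above is substantive rather than vacuously true. I do not anticipate any real obstacle here; the entire force of the corollary is front-loaded into the agreement condition $\mathcal F|_{st}=g$ that Theorem~\ref{th1} secures, and the proof is little more than an unwinding of Definitions~\ref{def:bsl} and~\ref{def:asc}.
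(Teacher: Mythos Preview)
Your proposal is correct and matches the paper's approach exactly: the paper states that the corollary ``is easily obtained from the Definitions~\ref{def:bsl} and \ref{def:asc}'' and gives no further argument, and your unwinding of property~(ii) together with Definition~\ref{def:asc} is precisely that. Your remark about non-vacuity via Theorem~\ref{th1} is a reasonable extra observation, though the paper does not make it explicit.
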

The converse is not true (see Example~\ref{ex:onlyalmoststat1}). Clearly $st\subsetneq \mathcal S$.

From the Definitions~\ref{def:bsl} and \ref{def:asc}, it follows that every bounded statistically convergent sequence is GAS convergent with the same limit.

\begin{lem}
	Every almost convergent real sequence is GAS convergent with the same limit, i.e. $\mathcal A\subset\mathcal S$.
\end{lem}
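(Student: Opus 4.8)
The plan is to derive this inclusion directly from Corollary~\ref{cor:bslf}, which records that every Banach statistical limit functional is in particular a Banach limit functional. The conceptual point is a comparison of invariance conditions: almost convergence asks for $B(x)$ to be invariant over the \emph{entire} family of Banach limits $B$, whereas GAS convergence only asks for $\mathcal{F}(x)$ to be invariant over the \emph{smaller} family of Banach statistical limits $\mathcal{F}$. Invariance of a value across a larger family automatically forces invariance across any subfamily, so almost convergence should be the stronger notion.

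Concretely, I would take $x \in \mathcal{A}$ and let $\ell$ be its almost limit, so that by definition $B(x) = \ell$ for \emph{every} Banach limit functional $B$ on $l_\infty$. Now let $\mathcal{F}$ be an arbitrary Banach statistical limit functional. By Corollary~\ref{cor:bslf}, $\mathcal{F}$ is itself a Banach limit functional, so applying the almost-convergence hypothesis to $\mathcal{F}$ yields $\mathcal{F}(x) = \ell$. Since the value $\ell$ does not depend on which $\mathcal{F}$ was chosen, the quantity $\mathcal{F}(x)$ is invariant over all Banach statistical limits. By Definition~\ref{def:asc} this says precisely that $x$ is GAS convergent to $\ell$, hence $x \in \mathcal{S}$; as $x \in \mathcal{A}$ was arbitrary, $\mathcal{A} \subset \mathcal{S}$, and the limit is preserved since the common GAS value equals the almost limit $\ell$.

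I do not anticipate any serious obstacle, because all of the analytic content has already been discharged in the proof of Theorem~\ref{th1} and distilled into Corollary~\ref{cor:bslf}; what remains is a one-line logical deduction. The only two points I would flag for care are that the family of Banach statistical limit functionals is nonempty (guaranteed by Theorem~\ref{th1}), so that the invariance assertion is not vacuous, and that the direction of the functional-class inclusion is used correctly, namely that the hypothesis of invariance over the larger class (all Banach limits) is what licenses the conclusion of invariance over the subclass (Banach statistical limits), and not the reverse.
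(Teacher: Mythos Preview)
Your proposal is correct and follows essentially the same approach as the paper: take an almost convergent sequence with limit $\ell$, invoke Corollary~\ref{cor:bslf} to view any Banach statistical limit functional as a Banach limit, and conclude $\mathcal{F}(x)=\ell$ for all such $\mathcal{F}$. Your additional remarks on non-emptiness of the family and the direction of the class inclusion are welcome clarifications but do not alter the argument.
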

\begin{proof}
Suppose $r=(r_i)_i\in l_\infty$ is an almost convergent real sequence with limit $\kappa$. Then for any Banach limit functional $B:l_\infty \to \mathbb R$ we have $B(r)=\kappa$. By Corollary~\ref{cor:bslf}, we can easily say that $\mathcal F(r)=\kappa$ for any Banach statistical limit functional $\mathcal F$. Thus $(r_n)_n$ is generalized almost statistically convergent to $\kappa$.
\end{proof}

The converse is not true. Because in Example~\ref{ex:onlyst} the sequence $x=(x_n)_n$ is not almost convergent. But it is statistically convergent, which implies $x\in \mathcal S$ (by Corollary~\ref{cor:stsubsets}). Clearly $\mathcal A\subsetneq\mathcal S$.

\begin{exmp}\label{ex:onlyalmoststat1}
	Consider the real bounded sequence $x=(x_n)_n$ defined by
	$$
	x_k = 
	\begin{cases}
	5 \quad\text{if } k\text{ is a perfect square number}    \\
	0 \quad\text{if } k\text{ is even and not a perfect square} \\
	1 \quad\text{if } k\text{ is odd and not a perfect square}
	\end{cases}
	$$
	i.e., $(x_n)_n=(5,0,1,5,1,0,1,0,5,0,1,0,1,0,1,5,1,0,1,0,1,0,1,0,5,0,\dots)$\newline
	Clearly, $x$ is not convergent in usual sense. Even $x$ is not convergent statistically. But $x$ is GAS convergent to $\frac{1}{2}$. For, let $S=\{n^2:n\in \mathbb N \}$. Now consider the map $T:l_\infty \to l_\infty$ (as in the Theorem~\ref{th1}) with $\sigma_T(x)=S$ defined by $Tz=y$, for all $z\in l_\infty$, where
	$$
	y_k = 
	\begin{cases}
	z_{k+1} \quad\text{if } k+1\notin S    \\
	0 \quad\text{if } k+1\in S  \text{ and } k \text{ is odd} \\
	1 \quad\text{if } k+1\in S  \text{ and } k \text{ is even.}
	\end{cases}
	$$
	Then $Tx=(0,1,0,1,0,1,\dots)$. Let $\mathcal F$ be any Banach statistical limit functional. Since, $u,v\in l_\infty$ with $u_k=v_k\ a.a.k$ implies $\mathcal F(u)=\mathcal F(v)$ (as shown in the proof of the Theorem~\ref{th1}), then $\mathcal F(x)=\mathcal F(1,0,1,0,1,0,\dots)$.  Then by the Theorem~\ref{th1} we have $\mathcal F(x)=\mathcal F(Tx)=\mathcal F(0,1,0,1,0,1,\dots)=\mathcal F((1,1,1,1,\dots)-(1,0,1,0,\dots))=\mathcal F(1,1,1,1,\dots)-\mathcal F(1,0,1,0,\dots)=1-\mathcal F(x)$. Thus $\mathcal F(x)=\frac{1}{2}$.
\end{exmp}

The following example shows that there exists a GAS convergent sequence, which is neither almost convergent nor statistically convergent. i.e. $st\cup \mathcal A\subsetneq \mathcal S$.
\begin{exmp}\label{ex:onlygalst}
	Consider the sequence $\xi=(\xi_n)_n$ defined as follows:
	$$\xi=(~\underbrace{1,0,1,0,\dots}_{100\text{ terms}}~,~   \overbrace{1,1,\dots,1}^{10\text{ terms}}~,~ \underbrace{1,0,1,0,\dots}_{100^2\text{ terms}}~,~ \overbrace{1,1,\dots,1}^{10^2\text{ terms}}~,~ \underbrace{1,0,1,0,\dots}_{100^3\text{ terms}}~,~   \overbrace{1,1,\dots,1}^{10^3\text{ terms}}~,~ \dots).$$
	It is easy to check that	 $\xi$ is neither a statistically convergent nor an almost convergent sequence. But $\xi$ is GAS convergent. For, let $E=\bigcup\limits_{i=1}^\infty(b_i-10^i,b_i]\cap 2\mathbb N$ with $b_i=\sum\limits_{j=1}^i (100^j+10^j)$ for each $i\in \mathbb N$. Let $T:l_\infty\to l_\infty$ defined by $Tx=z$ with 
	$$z_k=\begin{cases}
	x_{k+1}, \quad \text{if $k+1\notin E$}\\
	0,\quad \text{otherwise.}
	\end{cases}$$
	Then $z=(0,1,0,1,0,\dots)$. Since $\delta(E)=0$, $z_k=(Sx)_k\ a.a.k$.	Therefore, $\mathcal F(x)=\mathcal F(Tx)=\mathcal F(0,1,0,1,0,\dots)=\mathcal F(1,0,1,0,\dots)=\frac{1}{2}$.
\end{exmp}

\begin{thm}
GAS convergence can not be characterized by ideal convergence for proper ideals of $\mathbb N$.
\end{thm}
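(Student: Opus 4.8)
The plan is to argue by contradiction. I would begin by making precise what it means for GAS convergence to be \emph{characterized} by ideal convergence: that there is a proper ideal $\mathcal{I}$ of $\mathbb{N}$ (so $\mathbb{N}\notin\mathcal{I}$) for which the relation ``$x$ is GAS convergent to $\lambda$'' agrees with the relation ``$x$ is $\mathcal{I}$-convergent to $\lambda$,'' for every bounded $x$ and every $\lambda\in\mathbb{R}$. Assuming such an $\mathcal{I}$ exists, I then only need to exhibit a single bounded sequence on which the two notions are forced to disagree. The conceptual core I would isolate first is the following separation principle for proper ideals: if the terms of a bounded sequence stay uniformly bounded away from a value $\lambda$, i.e. $\inf_k|x_k-\lambda|>0$, then $x$ cannot be $\mathcal{I}$-convergent to $\lambda$. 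Indeed, choosing any $\epsilon$ with $0<\epsilon<\inf_k|x_k-\lambda|$ makes the level set $\{k\in\mathbb{N}:|x_k-\lambda|>\epsilon\}$ equal to all of $\mathbb{N}$, and $\mathbb{N}\notin\mathcal{I}$ by properness.

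The natural witness is the sequence $y=(1,0,1,0,\dots)$. By Example~\ref{ex:onlyalmost} it is almost convergent to $\tfrac12$, and since $\mathcal{A}\subset\mathcal{S}$ it is therefore GAS convergent to $\tfrac12$. Under the assumed characterization, $y$ would have to be $\mathcal{I}$-convergent to $\tfrac12$ as well. But every term of $y$ lies at distance exactly $\tfrac12$ from $\tfrac12$, so $\inf_k|y_k-\tfrac12|=\tfrac12>0$, and the separation principle immediately forbids $\mathcal{I}$-convergence of $y$ to $\tfrac12$ for any proper ideal. Concretely, for $\epsilon=\tfrac14$ the level set $\{k:|y_k-\tfrac12|>\tfrac14\}$ is all of $\mathbb{N}$, so $\mathcal{I}$-convergence would force $\mathbb{N}\in\mathcal{I}$, contradicting properness. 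This contradiction shows that no such proper ideal $\mathcal{I}$ can exist, which is exactly the assertion of the theorem. I would also remark that the phenomenon is not accidental: any GAS (indeed almost) convergent sequence whose limit falls strictly between the values it assumes would serve equally well, so the obstruction is intrinsic to the fact that GAS convergence can assign a limit that the sequence never approaches.

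I do not expect a serious technical obstacle; the argument is genuinely short once the separation principle is stated. The one point that demands care is the reading of ``characterized by ideal convergence,'' and in particular that the matching of \emph{limits} is part of the hypothesis. Under that (standard) reading the proof above is complete. If one instead adopts the weaker reading that only the sets of convergent sequences must coincide, the single-sequence argument must be supplemented, since $(1,0,1,0,\dots)$ can be $\mathcal{I}$-convergent to $0$ or $1$ whenever the odd or even indices happen to lie in $\mathcal{I}$; one would then have to play several such almost convergent sequences (with differing supports) against one another to force $\mathbb{N}\in\mathcal{I}$. I would therefore fix the limit-matching interpretation at the outset, so that the clean contradiction via $y=(1,0,1,0,\dots)$ and its GAS limit $\tfrac12$ carries the whole proof.
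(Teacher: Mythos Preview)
Your proposal is correct and follows essentially the same route as the paper: assume a proper ideal $\mathcal{I}$ characterizes GAS convergence, use the almost (hence GAS) convergent sequence $(1,0,1,0,\dots)$ with limit $\tfrac12$, and observe that the resulting level set is all of $\mathbb{N}$, contradicting properness. The only cosmetic differences are your explicit ``separation principle'' framing and your choice of $\epsilon=\tfrac14$ rather than the paper's $\epsilon=\tfrac12$.
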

\begin{proof}
	If possible, suppose that GAS convergence coincides with ideal convergence for some proper ideal $\mathcal I$ of $\mathbb N$. Since, $\xi=(1,0,1,0,\dots)$ is almost convergent to  $\frac{1}{2}$, $\xi$ is GAS convergent to $\frac{1}{2}$. Hence, for any $\epsilon>0$, $A_\epsilon:=\{k\in\mathbb N:|\xi_k-\frac{1}{2}|\geq \epsilon \}\in\mathcal I$. But $A_\frac{1}{2}=\mathbb N\in \mathcal I$, which contradicts that $\mathcal I$ is a proper ideal of $\mathbb N$.
\end{proof}

\begin{prop}\label{prop:topology}The following are some topological properties of $\mathcal S$\\
	(i) $\mathcal S$ is closed\\
	(ii) $\mathcal S$ is non-separable in $l_\infty$\\
	(iii) $\mathcal S$ is first countable but not second countable.\\
	(iv) $\mathcal S$ is not Lindel\"of and not compact.
\end{prop}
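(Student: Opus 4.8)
The plan is to regard $\mathcal{S}$ as a metric subspace of $(l_\infty,\|\cdot\|_\infty)$ and to reduce every assertion to two structural facts: that $\mathcal{S}$ is a closed linear subspace, and that it is non-separable. Once these are secured, parts (iii) and (iv) will follow mechanically from the standard metric-space equivalences (for a metric space, second countability, separability, and the Lindel\"of property all coincide, and compactness forces boundedness). First I would note that $\mathcal{S}$ is a linear subspace: for $x,y\in\mathcal{S}$ and $\alpha,\beta\in\mathbb{R}$, linearity gives $\mathcal{F}(\alpha x+\beta y)=\alpha\mathcal{F}(x)+\beta\mathcal{F}(y)$ for every Banach statistical limit functional $\mathcal{F}$, and the right-hand side is independent of $\mathcal{F}$ since $x,y$ are GAS convergent. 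For (i), suppose $x^{(m)}\in\mathcal{S}$ with $\|x^{(m)}-x\|_\infty\to0$, and let $\mathcal{F}_1,\mathcal{F}_2$ be any two Banach statistical limit functionals. Each has norm $1$, so inserting $x^{(m)}$ and using $\mathcal{F}_1(x^{(m)})=\mathcal{F}_2(x^{(m)})$ yields
\[
|\mathcal{F}_1(x)-\mathcal{F}_2(x)|\le|\mathcal{F}_1(x-x^{(m)})|+|\mathcal{F}_2(x^{(m)}-x)|\le 2\|x^{(m)}-x\|_\infty\longrightarrow 0,
\]
whence $\mathcal{F}_1(x)=\mathcal{F}_2(x)$ and $x\in\mathcal{S}$. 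This shows $\mathcal{S}$ is closed.

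For (ii) I would transfer non-separability upward along the inclusion $\mathcal{A}\subset\mathcal{S}$ established earlier. Lorentz's result, quoted in Section~2, asserts that $\mathcal{A}$ is a non-separable subspace of $l_\infty$. Since separability is hereditary for subspaces of a metric space, a separable $\mathcal{S}$ would force its subspace $\mathcal{A}$ to be separable as well, contradicting Lorentz. Hence $\mathcal{S}$ is non-separable.

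Parts (iii) and (iv) are then purely formal. As a subspace of the metric space $l_\infty$, the space $\mathcal{S}$ is itself metrizable, hence first countable. For a metric space, second countability is equivalent to separability; by (ii), $\mathcal{S}$ is therefore not second countable. Likewise the Lindel\"of property is equivalent to separability in the metric setting, so $\mathcal{S}$ is not Lindel\"of. Finally, $\mathcal{S}$ contains every constant sequence $(\lambda,\lambda,\dots)$, since these are convergent and hence lie in $st\subset\mathcal{S}$ by Corollary~\ref{cor:stsubsets}; thus $\mathcal{S}$ is an unbounded subset of $l_\infty$ and cannot be compact (one could alternatively invoke that non-separable metric spaces are never compact).

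I do not anticipate a serious obstacle here: the only genuine computation is the two-functional estimate proving closedness in (i), and everything else is a matter of correctly applying the metric-space dictionary once non-separability has been imported from $\mathcal{A}$. The single point requiring care is the hereditary-separability step in (ii), which is exactly what makes the non-separability of $\mathcal{S}$ follow from that of the smaller space $\mathcal{A}$ rather than requiring an independent construction of an uncountable $\varepsilon$-separated family inside $\mathcal{S}$.
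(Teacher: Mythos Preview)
Your argument is correct, and for parts (i), (iii), (iv) it is essentially the paper's proof: the closedness is obtained by the same two-functional continuity estimate, and the remaining items are read off from the metric-space equivalences exactly as the paper does. The genuine divergence is in (ii). You import non-separability from Lorentz's theorem on $\mathcal A$ via the hereditary nature of separability in metric spaces; the paper instead builds an explicit uncountable $1$-separated family inside $\mathcal S$, namely
\[
\Lambda=\Big\{x\in l_\infty: x_k\in\{0,1\}\ \text{if }k\text{ is a perfect square},\ x_k=0\text{ otherwise}\Big\}\subset\mathcal A\subset\mathcal S,
\]
and argues directly that any dense subset of $\mathcal S$ must meet the pairwise disjoint balls $B_d(s,\tfrac12)$, $s\in\Lambda$, in distinct points. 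Your route is shorter and perfectly legitimate given that Lorentz's non-separability of $\mathcal A$ is already quoted in Section~2; the paper's route is more self-contained and incidentally exhibits the same set $\Lambda$ that underlies Lorentz's own argument. Either way the conclusion and the downstream deductions for (iii) and (iv) are identical.
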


\begin{proof}
	(i) $\mathcal S$ is closed. For, let $s\in \bar{\mathcal  S}$. Then, by the sequence lemma, there exists a sequence $(s^{(n)})_{n\in\mathbb N}$ in $\mathcal S$ such that $\lim\limits_{n\to \infty}s^{(n)}=s$. Let $\rho, \tau$ be any two Banach statistical limit functional. Since $\rho, \tau$ are continuous, 
	$\rho(s)
	=\rho(\lim\limits_{n\to\infty}s^{(n)})
	=\lim\limits_{n\to\infty}(\rho (s^{(n)}))
	=\lim\limits_{n\to\infty}(\tau (s^{(n)}))
	=\tau(\lim\limits_{n\to\infty}s^{(n)})
	=\tau(s)$. So $s\in {\mathcal S}$. Hence $\mathcal S$ is closed.
	
	(ii) $\mathcal S$ is non-separable. For, consider $\Lambda\subset l_\infty$ defined as follows: $$\Lambda=\left\lbrace x\in l_\infty :x_k =\begin{smallmatrix}
\begin{cases}
0 \text{ or }1 \quad\text{if } k \text{ is a perfect square}    \\
0 \quad\text{otherwise. }
\end{cases} 
	\end{smallmatrix} \right\rbrace$$
	Clearly $\Lambda$ is an uncountable subset of $\mathcal A\subset\mathcal S$, which implies that $\mathcal S$ is uncountable. Now for any distinct $u,v\in\Lambda$, $d(u,v)=||u-v||_\infty=1$. Thus $\Lambda$ is an uncountable discrete subset of $\mathcal S$. Let $D$ be any dense set in $\mathcal S$ i.e. $\bar D=\mathcal S$. Let us consider any $s, t\in\Lambda$ with $s\not=t$. Then $s,t\in\mathcal S=\bar D$. Then the disjoint open balls $B_d(s,\frac{1}{2})$ and $B_d(t,\frac{1}{2})$ must have non-empty intersections with $D$, which implies that there are two distinct elements of $D$. Since $\Lambda$ is uncountable, $D$ is also uncountable. Thus $\mathcal S$ does not contain any countable dense subset, i.e. $\mathcal S$ is non-separable. 
	
	(iii) $\mathcal S$ is not second countable due to its non-separability.
	
	(iv) Since $\mathcal S$ is metrizable and not second countable, $\mathcal S$ is non-Lindel\"of and non-compact.
\end{proof}

\section{Conclusion}
The notion of GAS convergence is introduced in this paper as a generalization of almost convergence as well as statistical convergence of bounded real sequences. The existence of GAS convergence is ensured by the existence of Banach statistical limit functional (see Theorem \ref{th1}). Some topological properties of the space of all GAS convergent sequences are obtained (see Proposition \ref{prop:topology}). The GAS convergence can not be characterized by ideal convergence for proper ideals of $\mathbb N$. Open problem: In the analogy of Proposition \ref{prop:lorentz}, is there any necessary and sufficient condition for GAS convergence?

\newpage
\textbf{Acknowledgement:}  The second author is grateful to The Council Of Scientific and Industrial Research (CSIR), Government of India, for the award of JRF (Junior Research Fellowship).

\end{document}